\numberwithin{equation}{section}
\definecolor{col1}{rgb}{0.6, 0.7, 0.8}
\definecolor{col2}{rgb}{0.7, 0.8, 0.65}
\definecolor{col3}{rgb}{0.8, 0.9, 0.5}
\definecolor{col4}{rgb}{0.91,0.94, 0.53}
\definecolor{col5}{rgb}{0.98,0.99,0.6}
\definecolor{c4}{rgb}{0.58, 0.69, 0.62}
\definecolor{c3}{rgb}{0.64, 0.76, 0.68}
\definecolor{c2}{rgb}{0.74, 0.86, 0.78}
\definecolor{c1}{rgb}{0.84, 0.96, 0.88}
\definecolor{bondiblue}{rgb}{0.0, 0.58, 0.71}
\definecolor{d1}{rgb}{0.6, 0.6, 0.6}
\definecolor{d2}{rgb}{0.35, 0.68, 0.91}
\definecolor{d3}{rgb}{0.43, 0.62, 0.89}
\definecolor{d4}{rgb}{0.43, 0.62, 0.89}
\definecolor{d5}{rgb}{0.49, 0.67, 0.84}
\definecolor{d6}{rgb}{0.43, 0.62, 0.89}
\definecolor{d7}{rgb}{0.35, 0.68, 0.91}
\definecolor{d8}{rgb}{0.43, 0.62, 0.89}
\definecolor{d2}{rgb}{0.3, 0.68, 0.8}
\definecolor{pd}{rgb}{0.8,0.1,0.1}% Piotr
\definecolor{textcol}{rgb}{0.37,0,0.57}
\newtheorem{thm}{Theorem}[section]
\newtheorem{proposition}[thm]{Proposition}
\newtheorem{rem}{Remark}
\title{Log-concavity and concentration bounds for a single gap between GUE eigenvalues}
\author{Samuel G. G. Johnston}
\subjclass[2020]{60B20, 52A40, 60G15}
\keywords{
Gaussian Unitary Ensemble,
random matrix theory,
log-concave measures,
eigenvalue concentration,
concentration inequalities
}
\begin{document}

\maketitle

\begin{abstract}
We observe that the distribution of the eigenvalues of an $N$-by-$N$ GUE random matrix is log-concave on $\mathbb{R}^N$, and that the same is true for the law of a single gap between two consecutive eigenvalues. We use this observation to prove several concentration bounds for the semicircle-renormalised eigengaps, improving on bounds recently obtained in [Tao (2024). On the distribution of eigenvalues of GUE and its minors at fixed index. arXiv: https://arxiv.org/abs/2412.10889].
\end{abstract}

\section{Introduction and overview}

\subsection{GUE random matrices and their eigenvalues}
A Hermitian random matrix 
\begin{equation*}
   X = X^{(N)} = (X_{i,j})_{1 \leq i,j \leq N}
\end{equation*}
is said to be distributed according to the Gaussian unitary ensemble if its density is proportional to $e^{ - \mathrm{tr}(X^2) }$. Equivalently, the diagonal entries of this matrix are real Gaussian random variables with variance $1/2$, and the above-diagonal entries are complex Gaussian random variables with variance $1/4$. After determining the above-diagonal entries, the below-diagonal entries are defined by conjugation, i.e.\ $X_{j,i } := \overline{X_{i,j}}$ for $i > j$. 

The probability density function on $\mathbb{R}^N$ of the ordered eigenvalues
\begin{align*}
\lambda_1^{(N)} \leq \cdots \leq \lambda_N^{(N)}
\end{align*}
of $X^{(N)}$ takes the form
\begin{align} \label{eq:gue1}
p_N(t_1,\ldots,t_N) = C_N  \prod_{1 \leq i < j \leq N } (t_j-t_i)^2 \exp \left\{ - \sum_{i=1}^N t_i^2 \right\} \mathrm{1} \{ t_1 \leq \cdots \leq t_N \},
\end{align}
where $C_N$ is a normalisation constant. See e.g.\ \cite{AGZ}. We have chosen a scaling to agree with \cite{tao} and \cite{gustavsson}.

\subsection{The semicircle law}
In this article we are interested chiefly in the local behaviour of the gaps between consecutive eigenvalues in the bulk. We are interested in rescaling the gaps so that they have $O(1)$ behaviour. In this direction, a classical result in random matrix theory states that if for each $N \geq 1$, we have a vector $\lambda^{(N)} := (\lambda_1^{(N)}, \ldots, \lambda_N^{(N)})$ which has density function \eqref{eq:gue1}, then as $N \to \infty$ we have the almost-sure convergence
\begin{align} \label{eq:semi}
\frac{1}{N} \# \{ 1 \leq i \leq N : \frac{\lambda_i^{(N)}}{\sqrt{2N}} \in [a,b] \} \to \int_a^b \rho_\mathrm{sc}(x)\mathrm{d}x
\end{align}
where $\rho_{\mathrm{sc}}(x)=2(1-x^2)_+^{1/2}/\pi$ is the standard semicircle law.

We are interested in the normalised gaps between consecutive eigenvalues. The equation \eqref{eq:semi} states that for large $N$, the location of the eigenvalue $\lambda_i^{(N)}$ might be expected to be near $\sqrt{2N} \gamma_{i/N}$, where $\gamma_{i/N}$ is the $(i/N)^{\text{th}}$ quantile of the semicircle law, i.e.\ the solution to the equation 
\begin{align*}
\frac{i}{N} = \int_{-\infty}^{\gamma_{i/N}} \rho_{\mathrm{sc}}(x)\mathrm{d}x.
\end{align*}
Note that for large $N$, we might expect $\sqrt{2N} \gamma_{(i+1)/N} -\sqrt{2N} \gamma_{i/N} \approx \sqrt{2/N}/\rho_{\mathrm{sc}}(\gamma_{i/N})$. With this in mind, the central object we consider in this article is the \textbf{semicircle-renormalised eigengap}
\begin{align*}
g_i^{(N)} := \sqrt{N/2}\rho_{\mathrm{sc}}(\gamma_{i/N})(\lambda^{(N)}_{i+1}-\lambda^{(N)}_i).
\end{align*}
With $N$ implicitly understood, we will often drop the superscript and write
\begin{align*}
g_i := g_i^{(N)}.
\end{align*}

\subsection{Main result}

The main result of the recent preprint \cite{tao} by Tao is the following:

\begin{thm}[Theorem 1.1 of \cite{tao}] \label{thm:tao}
Whenever $\delta \leq i/N \leq 1-\delta$ the semicircle-renormalised eigengap $g_i = g_i^{(N)}$ satisfies the bounds 
\begin{enumerate}[label=(\roman*)]
\item (Concentration inequality) For all $0 < h \leq C \log \log N$ we have $\mathbf{P}( g_i \geq h ) \leq C_\delta e^{ - h/4}$. 
\item (Moment bound) For all $p \geq 1$ we have $\mathbf{E}[ g_i^p ] \leq C_{\delta,p}$. 
\item (Lower tail bound) For all $h >0 $ we have $\mathbf{P}( g_i \leq h ) \leq C_\delta h^{2/3} \log (1/h)$.
\item (Local eigenvalue rigidity) For any natural number $0 < m \leq \log^{O(1)}N$ and $\alpha > 0$ we have 
\begin{align*}
\mathbf{P}( |g_i + \cdots + g_{i+m-1} - m | > \alpha ) \leq C_\delta \frac{\log^{4/3}(2+m)}{\alpha^2}
\end{align*}
and
\begin{align*}
 \mathbf{E}[ |g_i + \cdots + g_{i+m-1} - m |^2]  \leq C_\delta \log^{7/3}(2+m).
\end{align*}
\end{enumerate}
\end{thm}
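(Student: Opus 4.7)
The entire plan is to reduce everything to the single observation that the joint GUE density \eqref{eq:gue1} is log-concave on $\mathbb{R}^N$. This log-concavity is essentially immediate: on the convex Weyl chamber $\{t_1 \leq \cdots \leq t_N\}$ each term $2\log(t_j-t_i)$ is concave (being the composition of the concave function $\log$ with the affine map $(t_i,t_j)\mapsto t_j-t_i$), and $-\sum_i t_i^2$ is strictly concave. Pr\'ekopa--Leindler then yields that the density of any linear functional of $(\lambda_1^{(N)},\ldots,\lambda_N^{(N)})$ is log-concave on $\mathbb{R}$; in particular each renormalised gap $g_i$ and each partial sum $S_m := g_i+\cdots+g_{i+m-1}$ has a log-concave density on $\mathbb{R}_+$.

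For (i) and (ii): standard semicircle asymptotics and eigenvalue rigidity give $\mathbf{E}[g_i] = 1 + o(1)$, uniformly in $\delta \leq i/N \leq 1-\delta$. A log-concave density on $\mathbb{R}_+$ with bounded mean has uniformly exponential tails $\mathbf{P}(X>h)\leq C e^{-ch}$ with $c$ controlled by the mean (past the mode, $-\log f$ is convex and the slope at the mode is controlled by the first moment). Since $\mathbf{E}[g_i]\to 1$, one obtains the rate $1/4$ required by (i) for $N$ large; integrating the tails against $h^{p-1}$ then gives (ii).

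For the lower tail (iii) one exploits a further structural observation: writing $p_N(t)=(t_{i+1}-t_i)^2 h(t)$, the residual factor $h$ is still log-concave on $\mathbb{R}^N$. Integrating out all coordinates except the gap via Pr\'ekopa--Leindler shows that the density of $g_i$ has the form $f_{g_i}(s) = s^2\,\phi(s)$ with $\phi$ log-concave on $[0,\infty)$. A short log-concave computation (combining the normalisation $\int s^2\phi(s)\,\mathrm{d}s = 1$ with the bounded mean of $g_i$) yields $\sup\phi = O_\delta(1)$, and therefore
\begin{equation*}
\mathbf{P}(g_i\leq h) \;\leq\; \tfrac{h^3}{3}\sup\phi \;=\; O_\delta(h^3),
\end{equation*}
a bound strictly stronger than the $h^{2/3}\log(1/h)$ in (iii).

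Part (iv) is the main obstacle. The sum $S_m$ has log-concave density by the first paragraph; after smoothing the slowly-varying semicircle weights $\rho_{\mathrm{sc}}(\gamma_{j/N})$ across a window of size $m \leq \log^{O(1)} N$ it agrees, up to negligible error, with the renormalised long-range gap $\sqrt{N/2}\,\rho_{\mathrm{sc}}(\gamma_{i/N})(\lambda_{i+m}-\lambda_i)$. Chebyshev then reduces the probability bound to a variance estimate, and log-concavity upgrades $L^2$ control into the $L^p$ control appearing in (iv). The hard part is establishing $\mathrm{Var}(S_m) = O(\log^{7/3}(2+m))$: log-concavity alone does not produce a logarithmic dependence on $m$, so the argument must import the $O(\log m)$ fluctuation estimates for long-range eigenvalue differences (Gustavsson-type CLTs or rigidity), and carefully account for the extra logarithmic losses introduced by telescoping and by replacing the sum of distinct weights by a single representative weight.
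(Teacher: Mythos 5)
The first thing to note is that this is an imported statement: the paper labels it ``Theorem 1.1 of \cite{tao}'' and does not prove it at all. It is used as a black box, and in particular part (iv) is used as the input to the paper's Theorem~\ref{thm:main2}. So there is no ``paper's own proof'' to compare against beyond the citation; what your sketch actually does is try to \emph{reprove} Tao's theorem from the log-concavity observations, which is a genuinely different (and more ambitious) route than the paper takes. In fact, your arguments for (i)--(iii) essentially reconstruct the mechanism by which the paper proves its \emph{improved} statement, Theorem~\ref{thm:main}, rather than Theorem~\ref{thm:tao} itself.

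On the correctness of the sketch. For (i)--(ii) your heuristic ``past the mode, $-\log f$ is convex and the slope at the mode is controlled by the first moment'' is morally right but not a proof; the paper makes this rigorous through convex ordering against the unit-rate exponential (Proposition~\ref{prop:main2}), which you should invoke rather than re-derive by gesture. The more serious gap is in (iii). You factor $p_N = (t_{i+1}-t_i)^2 h$ with $h$ log-concave (this is correct: the offending $-2\log(t_{i+1}-t_i)$ term cancels), and correctly deduce $f_{g_i}(s) = s^2\phi(s)$ with $\phi$ log-concave. But the step ``the normalisation $\int s^2\phi = 1$ together with the bounded mean $\int s^3\phi = O(1)$ yields $\sup\phi = O_\delta(1)$'' is false. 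For example $\phi(s) = \sqrt{M/\pi}\,e^{-M(s-1)^2}$, which is log-concave, satisfies $\int s^2\phi\,\mathrm{d}s \to 1$ and $\int s^3\phi\,\mathrm{d}s \to 1$ as $M\to\infty$ yet has $\sup\phi = \sqrt{M/\pi}\to\infty$. To salvage an $O(h^3)$ bound you would need to control $\sup_{[0,h]}\phi$, not $\sup\phi$, and that requires quantitative information about $\phi$ near the origin that the two moment constraints do not supply. (Note also that the paper itself does not claim $O(h^3)$; its Theorem~\ref{thm:main}(iii) gets $2(1+o(1))h$ by applying the put-option convex test function directly to the log-concave density of $g_i$, without using the $s^2$ factorisation at all.) Finally, for (iv) you yourself flag that log-concavity gives no $m$-dependence and that the $\mathrm{Var}(S_m) = O(\log^{7/3}(2+m))$ bound must be imported from rigidity estimates; this is exactly right, and it is precisely the non-log-concave content of Theorem~\ref{thm:tao}(iv) that the paper treats as external input. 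So as a proof of Theorem~\ref{thm:tao} the proposal is incomplete in (iii) and circular in (iv), but the (i)--(ii) outline and the $s^2\phi$ decomposition are the right raw ingredients for the paper's improvements.
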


In each case, $C_\delta > 0$ is a constant depending on $\delta$, and $C_{\delta,p}$ is a constant depending on both $\delta$ and $p$. 

In the present article, we will use techniques from the theory of log-concave functions in conjunction with absolute bounds from a different work \cite{tao2} of Tao to establish the following sharpening of parts (i), (ii) and (iii) of Theorem \ref{thm:tao}.

\begin{thm}\label{thm:main}
With $o(1)$ terms that converge to zero uniformly as $N \to \infty$ whenever $\delta \leq i/N \leq 1-\delta$, the semicircle-renormalised eigengap $g_i = g_i^{(N)}$
\begin{enumerate}[label=(\roman*)]
\item (Concentration inequality) For all $h > 0$ we have $\mathbf{P}( g_i \geq h ) \leq e^{1 - (1-o(1))h}$. 
\item (Moment bound) For all $p \geq 1$ we have $\mathbf{E}[ g_i^p ] \leq (1+o(1))^pp!$. 
\item (Lower tail bound) For all $h >0 $ we have $\mathbf{P}( g_i \leq h ) \leq 2(1+o(1))h$.
\end{enumerate}
\end{thm}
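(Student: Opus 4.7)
My plan is to exploit the log-concavity of the joint density of GUE eigenvalues on $\mathbb{R}^N$ and, via Prékopa's theorem, conclude that each $g_i = g_i^{(N)}$ has a log-concave density on $[0, \infty)$. Combined with the input $\mathbf{E}[g_i^{(N)}] = 1 + o(1)$ from the absolute bounds in \cite{tao2}, the three parts of Theorem \ref{thm:main} then reduce to sharp deterministic inequalities for log-concave probability measures on the half-line.

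The first step is the log-concavity of $p_N$. On the ordered chamber $C = \{t_1 \leq \cdots \leq t_N\}$, which is convex, the function $\log p_N - \log C_N = 2\sum_{i<j}\log(t_j - t_i) - \sum_i t_i^2$ is concave, since each $\log(t_j - t_i)$ is the composition of the concave $\log$ with a positive affine form and $-\sum_i t_i^2$ is concave. Hence $p_N$ is log-concave on $\mathbb{R}^N$. Because the map $T(t_1, \ldots, t_N) = t_{i+1} - t_i$ is linear, Prékopa's theorem gives that the pushforward density of $\lambda_{i+1}^{(N)} - \lambda_i^{(N)}$, and hence the density $f_i$ of $g_i^{(N)}$, is log-concave on $[0, \infty)$. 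Theorem \ref{thm:main} then reduces to three deterministic lemmas for a random variable $X \geq 0$ with log-concave density and mean $\mu$: (i$'$) $\mathbf{P}(X \geq h) \leq e^{1-h/\mu}$; (ii$'$) $\mathbf{E}[X^p] \leq p!\,\mu^p$ for $p \geq 1$; and (iii$'$) $\mathbf{P}(X \leq h) \leq 2h/\mu$, applied with $\mu = 1 + o(1)$. For (i$'$), the log-concavity of the survival function $\bar F$, with $\bar F(0) = 1$, gives the secant lower bound $\bar F(h) \geq \bar F(h_0)^{h/h_0}$ on $[0, h_0]$; integrating against $\mu = \int_0^\infty \bar F$ yields $\mu \geq h_0(1 - e^{-t})/t$ with $t := -\log \bar F(h_0)$, and the elementary inequality $(t+1)e^{-t} \leq 1$ for $t \geq 0$ rearranges this to $t \geq h_0/\mu - 1$. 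For (ii$'$), Berwald's inequality asserts that $p \mapsto (\mathbf{E}[X^p]/\Gamma(p+1))^{1/p}$ is non-increasing in $p > 0$ for log-concave $X \geq 0$, so comparing $p \geq 1$ with $p = 1$ gives $\mathbf{E}[X^p] \leq p!\,\mu^p$.

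The main obstacle I expect is the derivation of (iii$'$). Log-concavity of $\bar F$ and the mean constraint give tight control of the upper tail (via the tangent inequality at $0$, which in particular yields $f(0) \leq 1/\mu$), but only partial control of the lower tail once the mode of $f$ lies strictly inside $(0, \infty)$. The natural strategy is to split into cases: when $f$ is monotone decreasing on $[0, h]$, the bound $\mathbf{P}(X \leq h) \leq h f(0) \leq h/\mu$ is immediate; when $f$ has an interior mode, one combines log-concavity of the CDF $F$ with the Markov-type bound on the median $m_{1/2} \leq 2\mu$, and the factor $2$ in (iii$'$) tracks the limit of log-concave measures concentrating at a point. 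Once (i$'$)--(iii$'$) are in hand, substituting $\mu = 1 + o(1)$ and absorbing $1/\mu = 1 - o(1)$ into the error terms completes the proof of Theorem \ref{thm:main}.
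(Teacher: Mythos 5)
Your overall strategy matches the paper's: establish log-concavity of $g_i^{(N)}$ via log-concavity of the joint eigenvalue law and preservation under linear maps, reduce to deterministic inequalities for nonnegative log-concave random variables of mean $\mu$, then feed in $\mathbf{E}[g_i^{(N)}]=1+o(1)$ from \cite{tao2}. For (i$'$) and (ii$'$) your arguments are correct and in fact take a different route from the paper. For (i$'$) the paper instead uses that a unit-mean log-concave measure on $[0,\infty)$ is dominated in the convex order by the standard exponential (since the Radon--Nikodym derivative with respect to $e^{-x}\,\mathrm{d}x$ is log-concave), and tests against the ``call option'' $\varphi(x)=(x-(h-1))_+$; your direct argument via the secant bound $\bar F(h)\geq \bar F(h_0)^{h/h_0}$, $\mu\ge h_0(1-e^{-t})/t$ and $(1+t)e^{-t}\le1$ is self-contained and equally valid. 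For (ii$'$), Berwald's inequality is essentially the route the paper cites from \cite{brazitikos,LV}.

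The genuine gap is in (iii$'$), in the case where the density $f$ has an interior mode $x_0>0$. Your Case~1 (mode at $0$) is fine: the tangent bound $\bar F(x)\le e^{-f(0)x}$ gives $f(0)\le 1/\mu$ and hence $F(h)\le hf(0)\le h/\mu$. But in Case~2 the argument is only gestured at. Log-concavity of $F$ gives lower bounds on $F$ at convex combinations, while you need an upper bound on $F(h)$; the secant through $(0,\log F(0))=(0,-\infty)$ is vacuous, and the tangent at the median produces a bound involving $f(m_{1/2})$ rather than $1/\mu$. The Markov estimate $m_{1/2}\le 2\mu$ alone does not bound $F(h)$ for $h$ well below $m_{1/2}$, and ``the factor $2$ tracks the limit of log-concave measures concentrating at a point'' is a sharpness observation, not a proof step. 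The paper closes (iii) cleanly by the same convex-ordering trick as (i): test the ``put option'' $\varphi(x)=\frac1h(2h-x)_+$, which is convex and dominates $\mathrm{1}\{x\le h\}$, against the exponential to get $\mathbf{P}_\mu(X\le h)\le \frac1h\int_0^{2h}(2h-x)e^{-x}\,\mathrm{d}x\le 2h$. You would need either to adopt that argument or to give a complete case analysis for the interior-mode situation; as written, (iii$'$) is not established.
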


The $o(1)$ term in Theorem \ref{thm:main} is identical to the $o(1)$ term in equation (23) of \cite{tao2}. It is likely that using the techniques of that paper, it can be shown to be $O(\log(n)^{-\beta})$ for some $\beta  >0$.
% We discuss further the structure of the $o(1)$ term in Theorem \ref{thm:main} in Section \ref{sec:proofs}.
\vspace{3mm}

We are also able to manipulate the variance bound in part (iv) of Theorem \ref{thm:tao} in conjunction with the log-concavity techniques of the present paper to prove the following exponential tail bound in this same setting:

\begin{thm} \label{thm:main2} Let $0 < m \leq \log^{O(1)}N$ and $\delta \leq i/N \leq 1-\delta$. Then
\begin{align*}
\mathbf{P} ( |g_i + \cdots + g_{i+m-1} - m| > \alpha ) \leq \exp \left\{ 1 - c_\delta \frac{\alpha}{\log^{7/6}(2+m)}  \right\}. 
\end{align*}
\end{thm}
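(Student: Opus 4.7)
The plan is to combine two ingredients: the log-concavity of the joint eigenvalue law, observed in this paper, and the $L^2$-bound from part (iv) of Theorem \ref{thm:tao}. First I would note that $T := g_i + \cdots + g_{i+m-1}$ is a linear functional of the ordered eigenvalue vector; telescoping gives
\begin{equation*}
T = -\sqrt{N/2}\,\rho_{\mathrm{sc}}(\gamma_{i/N})\,\lambda_i^{(N)} + \sqrt{N/2}\,\rho_{\mathrm{sc}}(\gamma_{(i+m-1)/N})\,\lambda_{i+m}^{(N)} + \sum_{k=i+1}^{i+m-1} b_k \lambda_k^{(N)}
\end{equation*}
for some real coefficients $b_k$. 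The density $p_N$ in \eqref{eq:gue1} is supported on the convex cone $\{t_1 \leq \cdots \leq t_N\}$, and its log-density $2\sum_{i<j}\log(t_j-t_i) - \sum_i t_i^2$ is concave there. Prékopa--Leindler then tells us that the pushforward of this log-concave law under the linear functional above has a log-concave density on $\mathbb{R}$, so $T$ is a one-dimensional log-concave random variable.

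From part (iv) of Theorem \ref{thm:tao}, $\sigma := \sqrt{\mathbf{E}[(T-m)^2]} \leq C_\delta^{1/2}\log^{7/6}(2+m)$. I would then invoke the following one-dimensional concentration lemma: if $X$ has a log-concave density on $\mathbb{R}$ and $c\in\mathbb{R}$ is any reference point, then, setting $\sigma_c := \sqrt{\mathbf{E}[(X-c)^2]}$,
\begin{equation*}
\mathbf{P}(|X-c|>\alpha) \leq \exp\{1 - c_0\alpha/\sigma_c\} \qquad \text{for all } \alpha>0,
\end{equation*}
with $c_0>0$ absolute. Substituting $X=T$, $c=m$, and the bound on $\sigma$ yields the theorem, with the $\delta$-dependence absorbed into $c_\delta$.

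\textbf{Main obstacle.} The technical heart of the argument is the one-dimensional concentration lemma. The standard route is to first reduce to isotropic position: the affinely rescaled variable $\widetilde{X}:=(X-\mathbf{E}X)/\sigma_X$ is mean-zero, unit-variance, and log-concave, so its density is bounded above and below by absolute constants on a neighborhood of the mode, and log-concavity then forces exponential decay $\tilde{f}(x)\leq c_2 e^{-c_1|x|}$ in the tails. Integrating yields $\mathbf{P}(|X-\mathbf{E}X|>\alpha)\leq\exp(1-c_1\alpha/\sigma_X)$, and the passage from centering at $\mathbf{E}X$ to centering at an arbitrary $c$ costs at most $|c-\mathbf{E}X|\leq \sigma_c$ by Jensen, which is absorbed into the $e^1$ prefactor after relabelling $c_0$. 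The $e^{1-\cdot}$ form is natural here because the symmetric exponential, which is the extremal log-concave distribution, saturates the bound.
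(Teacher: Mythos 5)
Your proof is correct and takes essentially the same approach as the paper: both observe that $g_i + \cdots + g_{i+m-1} - m$ is an affine image of the log-concave GUE eigenvalue vector, use the variance bound from Theorem \ref{thm:tao}(iv), and conclude with a one-dimensional exponential-tail estimate for log-concave random variables. The one-dimensional lemma you state is, after normalising to $(X - c)/\sigma_c$ (which has second moment exactly $1$), precisely Lemma 5.7 of Lov\'asz--Vempala, recorded as \eqref{eq:tail} in the paper, so you may cite it directly with $c_0 = 1$ rather than re-deriving it via mean-centering and isotropic position as in your ``main obstacle'' discussion.
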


That completes the statements of our main results.

\subsection{Overview}
The remainder of this article is structured as follows. In Section \ref{sec:further} we discuss the notion of log-concavity in probability and its ramifications for the eigenvalue gaps of GUE. We also discuss some related implications for the Gaudin-Mehta distribution and Gelfand-Tsetlin patterns. In Section \ref{sec:proofs} we provide the proofs of our results.

\section{Further discussion and proof ideas} \label{sec:further}

\subsection{Log-concave probability measures}
The joint law of the GUE eigenvalues is an example of a log-concave probability measure on $\mathbb{R}^N$. Namely, a probability measure $\mu$ on $\mathbb{R}^N$ is said to be log-concave if for all Borel subsets $A$ and $B$ of $\mathbb{R}^N$, we have
\begin{align} \label{eq:logconcave}
\mu( \lambda A + (1-\lambda) B) \geq \mu(A)^\lambda \mu(B)^{1-\lambda},
\end{align}
where $\lambda A + (1-\lambda)B$ is the set of vectors in $\mathbb{R}^N$ of the form $\lambda a + (1-\lambda)b$, where $a \in A$ and $b \in B$. If $\mu$ has a probability density function $f:\mathbb{R}^N \to [0,\infty)$ with respect to Lebesgue measure, then $\mu$ is log-concave if and only if its probability density function may be written $f = e^{ - V}$ where $V$ is convex. 

In our setting, one can see that the density function $p_N$ of the GUE eigenvalues may be written $p_N(t_1,\ldots,t_N) = e^{ - V_N(t_1,\ldots,t_N)}$ where
\begin{align*}
V_N(t_1,\ldots,t_N) =
\begin{cases}
    - \log C_N - 2 \sum_{1 \leq i < j \leq N } \log (t_j - t_i) + \sum_{i=1}^N t_i^2 \qquad &\text{if $t_1 < \cdots < t_N$},\\
    +\infty \qquad &\text{otherwise}.
\end{cases} 
\end{align*}
Observing that the function $V_N:\mathbb{R}^N \to \mathbb{R}$ is convex we have the following remark:

\begin{rem} \label{rem:gue}
The ordered GUE eigenvalues $\lambda^{(N)}_1 \leq \cdots \leq \lambda^{(N)}_N$ form a random vector $(\lambda^{(N)}_1,\ldots,\lambda^{(N)}_N)$ with a log-concave probability law on $\mathbb{R}^N$. 
\end{rem}

Random vectors with log-concave laws are guaranteed to satisfy a range of nice concentration properties \cite{brazitikos}. 
Moreover, the attribute of log-concavity is preserved under a large class of transformations. Indeed, we have the following well-known observation, which is an immediate consequence of the fact that the inequality \eqref{eq:logconcave} is preserved under taking linear maps. 

\begin{proposition} \label{prop:proj}  \cite{LV, prekopa, leindler}
Let $X$ be a random vector in $\mathbb{R}^N$ with a log-concave law and let $T:\mathbb{R}^N \to \mathbb{R}^n$ be a linear map. Then the image $T(X)$ has a log-concave law on $\mathbb{R}^n$. 
\end{proposition}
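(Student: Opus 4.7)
The plan is to verify the defining inequality \eqref{eq:logconcave} directly for the push-forward measure $\nu := \mu \circ T^{-1}$, where $\mu$ denotes the law of $X$. The whole argument hinges on a single set-theoretic observation together with the hypothesis that $\mu$ is log-concave. First I would fix Borel sets $A, B \subseteq \mathbb{R}^n$ and $\lambda \in [0,1]$, and record the inclusion
\[
\lambda T^{-1}(A) + (1-\lambda) T^{-1}(B) \subseteq T^{-1}\bigl(\lambda A + (1-\lambda) B\bigr),
\]
which is immediate from linearity of $T$: if $x = \lambda x_1 + (1-\lambda) x_2$ with $T(x_1) \in A$ and $T(x_2) \in B$, then $T(x) = \lambda T(x_1) + (1-\lambda) T(x_2)$ lies in $\lambda A + (1-\lambda) B$.

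Applying $\mu$ to this inclusion, together with monotonicity and the log-concavity hypothesis applied to the Borel sets $T^{-1}(A)$ and $T^{-1}(B)$, yields
\[
\nu\bigl(\lambda A + (1-\lambda) B\bigr) = \mu\bigl(T^{-1}(\lambda A + (1-\lambda) B)\bigr) \geq \mu(T^{-1}(A))^\lambda \mu(T^{-1}(B))^{1-\lambda} = \nu(A)^\lambda \nu(B)^{1-\lambda},
\]
which is exactly \eqref{eq:logconcave} for $\nu$, so that $T(X)$ has a log-concave law on $\mathbb{R}^n$.

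The only point requiring a moment's care is that the Minkowski sum $\lambda T^{-1}(A) + (1-\lambda) T^{-1}(B)$ of two Borel subsets of $\mathbb{R}^N$ need not itself be Borel; it is, however, always analytic and hence universally measurable, so the middle inequality remains valid once $\mu$ is extended to its universal completion, as is standard in this literature. Alternatively, one could approximate $A$ and $B$ from inside by compact sets and pass to the limit, since Minkowski sums of compact sets are compact. I do not expect any real obstacle here: the statement is essentially a one-line corollary of the linearity of $T$ together with the definition of log-concavity, exactly as noted in the paper.
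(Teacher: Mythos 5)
Your proof is correct and takes essentially the same route the paper indicates: the paper states the result is ``an immediate consequence of the fact that the inequality \eqref{eq:logconcave} is preserved under taking linear maps,'' and your argument -- establishing $\lambda T^{-1}(A) + (1-\lambda) T^{-1}(B) \subseteq T^{-1}(\lambda A + (1-\lambda) B)$ and then applying monotonicity followed by log-concavity of $\mu$ -- is precisely a careful unfolding of that one-sentence justification. Your remark on the measurability of Minkowski sums is a genuine technicality already implicit in the definition \eqref{eq:logconcave}, and the inner-compact-approximation fix you suggest is the standard way to dispose of it.
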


We will primarily be interested in the case $n=1$ of Proposition \ref{prop:proj}. Indeed, for instance combining Remark \ref{rem:gue} and Proposition \ref{prop:proj}, we have the following observation:

\begin{rem} \label{rem:gue2}
For any $1 \leq i \leq N-1$, the law of the semicircle-renormalised eigengap $g_i^{(N)}$ is log-concave and supported on $[0,\infty)$. 
\end{rem}

\subsection{Log-concave random variables and proofs of Theorem \ref{thm:main} and Theorem \ref{thm:main2}}

As observed above, the semicircle-renormalised eigengaps $g_i$ are log-concave random variables supported on $[0,\infty)$. Such random variables are very well behaved: 
% Log-concave random variables with distributions supported on $[0,\infty)$ are exceptionally well behaved.This behaviour is best understood after factoring out the mean of the distribution, where we have the following result:
\begin{proposition} [Variant of results in \cite{MM, LV}] \label{prop:main2}
If $X$ is a unit-mean random variable with a log-concave density supported on $[0,\infty)$, then $X$ satisfies the following properties:
\begin{enumerate}[label=(\roman*)]
\item (Concentration inequality) For all $h > 0$ we have $\mathbf{P}( X \geq h ) \leq e^{1-h}$. 
\item (Moment bound) For all $p \geq 1$ we have $\mathbf{E}[ X^p] \leq p!$. 
\item (Lower tail bound) For all $h > 0$ we have $\mathbf{P}( X \leq h ) \leq 2h$. 
\item (Gr\"unbaum's inequality) We have $\mathbf{P}( X \geq 1) \geq 1/e$. 
\end{enumerate}
\end{proposition}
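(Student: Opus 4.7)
The plan is to combine three classical facts about one-dimensional log-concave probability measures on $[0,\infty)$: Gr\"unbaum's inequality, Berwald's inequality, and a direct chord-bound argument based on convexity of $-\log S$. The starting point is that, since $X$ has a log-concave density, its survival function $S := 1 - F$ is also log-concave (an instance of Pr\'ekopa--Leindler, like Proposition~\ref{prop:proj}), so $\phi := -\log S$ is a convex nondecreasing function on $[0,\infty)$ with $\phi(0) = 0$, subject only to the global constraint $\int_0^\infty S(x)\,dx = \mathbf{E}[X] = 1$. Part (iv) is precisely the one-dimensional Gr\"unbaum inequality applied to $X$: for any log-concave law on $\mathbb{R}$, the half-line starting at the mean carries mass at least $1/e$.

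For part (i), I would combine convexity of $\phi$ with Markov's inequality. On $[0,h]$ the chord from $(0,0)$ to $(h,\phi(h))$ lies above the graph of $\phi$, giving $\phi(x) \leq (x/h)\phi(h)$ and hence $S(x) \geq S(h)^{x/h}$. Integrating and using $\int_0^\infty S = 1$ yields the implicit inequality $\phi(h) \geq h(1-S(h))$; plugging in the Markov bound $S(h) \leq 1/h$ gives $\phi(h) \geq h-1$ for $h \geq 1$, i.e.\ $S(h) \leq e^{1-h}$. For $h \leq 1$ the bound is automatic since $e^{1-h} \geq 1$. Part (iii) then follows by reapplying the same chord argument between $0$ and $1$: convexity gives $\phi(h) \leq h\phi(1)$ on $[0,1]$, so $S(h) \geq S(1)^h \geq e^{-h}$ by (iv), and hence $\mathbf{P}(X \leq h) \leq 1 - e^{-h} \leq h \leq 2h$ on $[0,1]$; the bound is trivial for $h > 1$ since $\mathbf{P}(X \leq h) \leq 1 \leq 2h$.

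The only part which does not reduce to a chord-bound manipulation is (ii). Here I would invoke Berwald's inequality: for every nonnegative log-concave random variable the function $p \mapsto (\mathbf{E}[X^p]/\Gamma(p+1))^{1/p}$ is non-increasing on $(0,\infty)$. Comparing $p \geq 1$ with $p = 1$ yields $\mathbf{E}[X^p] \leq \Gamma(p+1)\,\mathbf{E}[X]^p = p!$. The main obstacle is not computational but bibliographic: parts (ii) and (iv) depend on the genuinely nontrivial Berwald and Gr\"unbaum inequalities, which I would treat as black boxes cited from the references \cite{MM, LV}. Once those are in hand, parts (i) and (iii) are short, essentially identical chord-bound computations, and (ii) follows from a single application of Berwald.
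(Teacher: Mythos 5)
Your argument is correct, and it takes a genuinely different route from the paper's. The paper derives (i), (ii), and (iii) in one stroke from a single convex-ordering fact: since a unit-mean log-concave density $f$ on $[0,\infty)$ has $f(x)e^x$ log-concave, the law is dominated in the convex order by the standard exponential distribution (\cite{MM}), after which (i), (ii), (iii) drop out by testing against the convex functions $(x-(h-1))_+$, $x^p$, and $\tfrac{1}{h}(2h-x)_+$ respectively, while (iv) is cited separately. You instead work directly with convexity of $\phi = -\log S$: (i) comes from the chord bound $S(x) \geq S(h)^{x/h}$ on $[0,h]$, integrated against the constraint $\int_0^\infty S = 1$ and closed with Markov's inequality; (iii) comes from the analogous chord bound on $[0,1]$ bootstrapped off Gr\"unbaum (iv); and (ii) is outsourced to Berwald. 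Both approaches ultimately rest on comparable black boxes (the paper leans on the \cite{MM} relative-log-concavity/convex-order lemma; you lean on Berwald and Gr\"unbaum), but the paper's version is more unified — the single domination $\mu \preceq_{\mathrm{cx}} \nu_{\mathrm{exp}}$ produces all three bounds by merely varying the convex test function, whereas your (iii) is logically downstream of (iv). On the other hand, your chord computation for (iii) actually delivers the sharper estimate $\mathbf{P}(X \leq h) \leq 1 - e^{-h} \leq h$ for $h \leq 1$, improving on the stated constant $2$; and your derivation of (i) is elementary enough to be self-contained modulo log-concavity of the survival function, avoiding any appeal to relative log-concavity. Either version is a valid proof of the proposition.
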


Part (ii) and (iv) of Proposition \ref{prop:main2} can both be found in Section 2 of \cite{brazitikos} (see also \cite[Lemma 5.4]{LV}). In Section \ref{sec:proofs} we show how parts (i) and (iii) of Proposition \ref{prop:main2} follow from results on convex orderings between log-concave random variables \cite{MM}.

Consider now the expectation-normalised eigengap
\begin{align*}
\tilde{g}_i = \tilde{g}_i^{(N)} := \frac{ \lambda^{(N)}_{i+1} - \lambda^{(N)}_i }{ \mathbb{E}[ \lambda^{(N)}_{i+1} - \lambda^{(N)}_i ] } \qquad 1 \leq i < N,
\end{align*}
which is plainly unit-mean, takes values in $[0,\infty)$, and has a log-concave distribution for the same reason $g_i$ does. We see immediately that $\tilde{g}_i$ naturally satisfies each of the bounds (i)-(iv) in Proposition \ref{prop:main2}. After making this observation, the proof of Theorem \ref{thm:main} follows from showing that $g_i = (1+o(1))\tilde{g}_i$, which is equivalent to establishing that
\begin{align} \label{eq:asymp}
\mathbf{E}[g_i^{(N)}] = 1 + o(1).
\end{align}
We explain in Section \ref{sec:proofs} how \eqref{eq:asymp} follows from bounds obtained in Section 4 of \cite{tao2}.
% We prove the relation \eqref{eq:asymp} using hard bounds obtained in \cite{tao2} using the determinantal structure of the GUE eigenvalues. 

\vspace{3mm}
We turn to discussing the proof of Theorem \ref{thm:main2}. Here we appeal to another well-known bound, which states that if $X$ is a real-valued random variable with a log-concave distribution, then
\begin{align} \label{eq:tail}
\mathbf{E}[|X|^2] \leq 1 \implies \mathbf{P}( |X| > t ) \leq e^{1 - t } \qquad \text{for all $t > 0$};
\end{align}
see Lemma 5.7 of Lov\'asz and Vempala \cite{LV}.
We are now able to prove Theorem \ref{thm:main2} by combining the variance bound in Theorem \ref{thm:tao} (iv) with our various observations in this section:

\begin{proof}[Proof of Theorem \ref{thm:main2}]
Set 
\begin{align*}
\sigma_{i,m}^2  =  \mathbf{E}[ |g_i + \cdots + g_{i+m-1} - m |^2].
\end{align*}
Theorem \ref{thm:tao} (iv) states that provided $0 < m \leq \log^{O(1)}N$ we have 
\begin{align} \label{eq:cris}
\sigma_{i,m} \leq C_\delta \log^{7/6}(2+m).
\end{align}
Now the random variable
\begin{align*}
X_{i,m} := \frac{ g_i + \cdots + g_{i+m-1} - m  }{\sigma_{i,m} }
\end{align*}
plainly satisfies $\mathbf{E}[|X_{i,m}|^2] = 1$. Moreover, by Remark \ref{rem:gue} and Proposition \ref{prop:proj}, $X_{i,m}$ has a log-concave distribution. It follows from \eqref{eq:tail} that
\begin{align*}
\mathbf{P}\left( |g_i + \cdots + g_{i+m-1} - m | > \alpha \right) = \mathbf{P}( |X_{i,m}| > \alpha/\sigma_{i,m} ) \leq e^{1 - \alpha/\sigma_{i,m}}.
\end{align*}
The result now follows from using the upper bound in \eqref{eq:cris}.
\end{proof}

That completes the outlines of the proofs of our main results. In the next few sections we discuss some related ideas and extensions. 

\subsection{Log-concavity of the Gaudin--Mehta distribution}
The \textbf{Gaudin--Mehta distribution} is the asymptotic law of the gap between consecutive bulk eigenvalues of a GUE random matrix \cite{gaudin, mehta}.
Roughly speaking, the Gaudin--Mehta distribution is the limiting law of $g_i^{(N)}$ in the bulk. 
More specifically, according to the main result of \cite{tao2}, provided $i,N$ tend to infinity in a way that $i/N$ is uniformly bounded away from $0$ and $1$, we have
\begin{align} \label{eq:tao2} 
\lim_{i,N \to \infty} \mathbf{P}( g_i^{(N)} \in [0,h] ) = \int_0^h \mu_{\mathrm{GM}}(\mathrm{d}s),
\end{align}
where $\mu_{\mathrm{GM}}$ is the Gaudin--Mehta distribution, which can be characterised implicitly using a Fredholm determinant of the sine kernel (see, e.g.\ equation (3) of \cite{tao2}). Treating the equation \eqref{eq:tao2} as the definition of the Gaudin--Mehta distribution, we have the following:

\begin{thm}
The Gaudin--Mehta distribution $\mu_{\mathrm{GM}}$ is log-concave and supported on $[0,\infty)$. In particular, a random variable with the Gaudin--Mehta distribution satisfies (i)--(iv) in Proposition \ref{prop:main2}. 
\end{thm}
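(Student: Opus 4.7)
The plan is to deduce the theorem by passing to the weak limit that defines $\mu_{\mathrm{GM}}$. By Remark \ref{rem:gue2}, for each $N$ and each $i$ in the bulk the semicircle-renormalised eigengap $g_i^{(N)}$ has a log-concave law supported on $[0,\infty)$, and by \eqref{eq:tao2} these laws converge weakly to $\mu_{\mathrm{GM}}$ as $i,N\to\infty$ with $i/N$ bounded away from $0$ and $1$. The theorem then reduces to two steps: (a) showing that the class of one-dimensional log-concave probability measures supported on $[0,\infty)$ is closed under weak convergence, and (b) identifying the mean of $\mu_{\mathrm{GM}}$ as $1$, so that Proposition \ref{prop:main2} applies.

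For (a), the support statement is immediate, since $(-\infty,0)$ is open and has zero mass under each $g_i^{(N)}$, hence under $\mu_{\mathrm{GM}}$ by the Portmanteau theorem. For log-concavity, I would start from
\[
\mathbf{P}\bigl(g_i^{(N)} \in \lambda A + (1-\lambda)B\bigr) \geq \mathbf{P}(g_i^{(N)}\in A)^\lambda \, \mathbf{P}(g_i^{(N)}\in B)^{1-\lambda}
\]
valid for all Borel $A,B$ by Remark \ref{rem:gue2}, and specialise to $A,B$ taken to be finite unions of compact intervals whose endpoints lie in the (co-countable) continuity set of the limiting CDF. For such sets $\lambda A + (1-\lambda)B$ is again a finite union of intervals with continuity-point endpoints, so both sides converge to the corresponding quantities for $\mu_{\mathrm{GM}}$, yielding the inequality \eqref{eq:logconcave} on this class. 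The inequality then extends to arbitrary Borel sets by inner and outer regularity of $\mu_{\mathrm{GM}}$, using that its density is continuous so that compact intervals with endpoints in the complement of a countable set form a basis on which mass is continuously approximated.

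For (b), I would combine equation \eqref{eq:asymp}, which asserts $\mathbf{E}[g_i^{(N)}] = 1 + o(1)$, with uniform integrability of the family $\{g_i^{(N)}\}$, a consequence of the uniform $L^2$ bound in Theorem \ref{thm:main}(ii) at $p=2$. This forces $\int x \, \mu_{\mathrm{GM}}(\mathrm{d}x) = 1$. Combined with (a), Proposition \ref{prop:main2} then directly delivers items (i)--(iv) of the conclusion.

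I expect the main obstacle to be the set-approximation argument in (a): while closure of log-concavity under weak convergence is essentially folklore, one must be careful because the log-concavity inequality is stated for all Borel sets, whereas weak convergence only transfers information cleanly through continuity sets of the limit. The absolute continuity of $\mu_{\mathrm{GM}}$, evident from its Fredholm-determinant description, is the feature that makes this approximation go through without difficulty.
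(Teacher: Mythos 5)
Your proposal follows essentially the same route as the paper: log-concavity and support follow by passing to the weak limit in \eqref{eq:tao2}, and the mean-one property lets Proposition \ref{prop:main2} be applied. One minor difference is in identifying the mean: you invoke uniform integrability (from the uniform $L^2$ bound) together with \eqref{eq:asymp}, whereas the paper's proof of Proposition \ref{prop:expas} already shows directly that $\int_0^\infty y\,p(y)\,\mathrm{d}y = 1$ by plugging into Tao's formula, making the UI step unnecessary — though your argument is also valid and arguably more robust. Your sketch of the closure of log-concavity under weak limits is more detailed than the paper's one-line assertion, and the structure is right, but the final extension from finite unions of intervals to arbitrary Borel sets by ``inner and outer regularity'' is more delicate than you suggest (Minkowski sums don't interact cleanly with measure-theoretic approximation); the cleaner route is to use Borell's characterisation, noting that the weak limit of log-concave densities, when non-degenerate, is absolutely continuous with a log-concave density.
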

\begin{proof}
The fact that the distribution is supported on $[0,\infty)$ is immediate from \eqref{eq:tao2}. The fact it has unit mean follows from \eqref{eq:asymp} and \eqref{eq:tao2}. As for log-concavity, as noted in Remark \ref{rem:gue2}, for each $1 \leq i < N$, the random variable $g_i^{(N)}$ has a log-concave probability measure. If a sequence of random variables with log-concave laws converge in distribution, then their limit law must be log-concave. It follows that $\mu_{\mathrm{GM}}$, which by \eqref{eq:tao2} occurs as a limit of such laws, is also log-concave. 
\end{proof}

This result is a special case of a more general conjecture made in \cite{JP}, where it is predicted that a random point process with sine kernel determinantal correlations has log-concave marginals.

\subsection{Log-concavity of Gelfand--Tsetlin patterns}

Here we touch on a central idea in \cite{JP}. 
A Gelfand--Tsetlin pattern is a collection $(t_{k,j})_{1 \leq j \leq k \leq n}$ of real numbers that satisfy the interlacing inequalities
\begin{align*}
t_{k+1,j} \le t_{k,j} \le t_{k+1,j+1}. \qquad 1 \leq j \leq k \leq n-1.
\end{align*} 
A natural way in which Gelfand--Tsetlin patterns arise is in the eigenvalues of random matrices. For $1 \leq j \leq k \leq N$, let us define the real-valued random variable 
\begin{align*}
T^{(N)}_{k,j} := \text{$j^{\text{th}}$ largest eigenvalue of the $k$-by-$k$ principal minor of the $N$-by-$N$ GUE $X^{(N)}$}.
\end{align*}
Then the collection of random variables $(T^{(N)}_{k,j})_{1 \leq j \leq k \leq N}$, which we refer to as the eigenvalue process of $X^{(N)}$, form a Gelfand--Tsetlin pattern \cite{baryshnikov}. In fact, they have a joint density on $\mathbb{R}^{N(N+1)/2}$ given by 
\begin{align} \label{eq:gue2}
Q_N(t_{k,j} : 1 \leq j \leq k \leq N) =\frac{1}{Z_N'}\mathrm{1} \{ \text{$(t_{k,j})$ forms a GT pattern} \}  \prod_{1 \leq i < j \leq N} (t_{N,j}-t_{N,i}) \exp\left\{ - \sum_{i=1}^N t_{N,i}^2 \right\},
\end{align}
where $\mathrm{1} \{ \text{$(t_{k,j})$ forms a GT pattern} \} $ is the indicator function that all of the interlacing inequalities $t_{k+1,j} \leq t_{k,j} \leq t_{k+1,j+1}$ are satisfied (for all suitable $k$ and $j$). One can integrate \eqref{eq:gue2} over $(t_{k,j} : 1 \leq j \leq k < N )$ to obtain an extra power of the Vandermonde determinant, yielding \eqref{eq:gue1} \cite{baryshnikov}.

We can observe that the interlacing inequalities are stable under convex combinations. In other words, if $(t_{k,j})_{1 \leq j \leq k \leq N}$ and $(t'_{k,j})_{1 \leq j \leq k \leq N}$ are two Gelfand--Tsetlin patterns, it follows that so is the convex combination $(\lambda t_{k,j} + (1-\lambda)t'_{k,j})_{1 \leq j \leq k \leq N}$. 
In particular, one can deduce the following remark.

\begin{rem}
The joint law of the eigenvalue process $(T^{(N)}_{k,j})_{1 \leq j \leq k \leq N}$ of an $N$-by-$N$ GUE random matrix is a log-concave probability measure on $\mathbb{R}^{N(N+1)/2}$. 
\end{rem}

Accordingly, various natural linear functionals of the eigenvalue process (such as the difference between the $k^{\text{th}}$ largest eigenvalue of $X$ and that of an $(N-1)$-by-$(N-1)$ principal minor) also have log-concave marginals.

\subsection{Related work}
The log-concavity of Gelfand--Tsetlin patterns plays a central role in recent work of the author with Prochno \cite{JP}, where a large deviation principle for these patterns is established, with applications to problems in free probability.

Concentration bounds obtained by Tao \cite{tao} have recently been exploited by Narayanan \cite{narayanan}, who studies random hives (representation-theoretic objects encoding the eigenvalues of sums of Hermitian matrices) with GUE boundary conditions. See also \cite{NS, NST} for related developments.

More generally, the study of eigenvalues of random matrices remains highly active. Particularly relevant for the present work is Gustavsson’s central limit theorem \cite{gustavsson}, which shows that the centred and normalised eigenvalue
\[
Z_k^{(N)} = \frac{\lambda_k^{(N)} - \mathbf{E}[\lambda_k^{(N)}]}
{\mathbf{E}\!\left[|\lambda_k^{(N)} - \mathbf{E}[\lambda_k^{(N)}]|^2\right]^{1/2}}
\]
converges in distribution to a standard Gaussian. Moreover, the variance of $\lambda_k^{(N)}$ is of order $\log N / N$, reflecting the rigidity of the spectrum at mesoscopic scales.

The theory of log-concave measures forms part of a broader body of work surrounding the Brunn--Minkowski and Pr\'ekopa--Leindler inequalities and the geometry of convex bodies, and has been a central theme in asymptotic geometric analysis over the past several decades. Background and further references may be found in the monographs \cite{brazitikos, AGA}.

Tao observes in \cite{tao2} that the upper and lower tail bounds in Theorem~\ref{thm:tao} are far from sharp; for example, the upper tails of the Gaudin--Mehta distribution are known to exhibit Gaussian decay. One possible avenue for improving Theorem~\ref{thm:main} is the notion of \emph{relative log-concavity} \cite{MM, JM}. A measure $\nu$ is said to be log-concave relative to $\mu$ if the Radon--Nikodym derivative $\mathrm{d}\nu/\mathrm{d}\mu$ is log-concave. Since the joint law of GUE eigenvalues is log-concave relative to the multivariate Gaussian distribution, it is natural to ask whether this structure can be exploited to obtain Gaussian tail bounds for eigenvalue gaps.

\section{Proofs} \label{sec:proofs}

\subsection{Proof of Proposition \ref{prop:main2}}

In this section we outline how Proposition \ref{prop:main2} follows from various properties in the literature.

As noted following its statement, Part (ii) and (iv) of Proposition \ref{prop:main2} are from \cite{brazitikos, LV}. We now explain how to obtain parts (i) and (iii) from Proposition \ref{prop:main2} using results of Marsiglietti and Melbourne \cite{MM} on convex orderings between log-concave random variables. 
Given two probability measures $\mu$ and $\nu$ on the real line, we say that $\mu$ is dominated by $\nu$ in the convex order, and write $\mu \preceq_{\mathrm{cx}} \nu$, if the inequality
\begin{align*}
\int_{-\infty}^\infty \varphi(x)\,\mu(\mathrm{d}x) \;\leq\; \int_{-\infty}^\infty \varphi(x)\,\nu(\mathrm{d}x)
\end{align*}
holds for every convex function $\varphi:\mathbb{R}\to\mathbb{R}$ for which the relevant integrals exist. By considering the functions $\varphi(x)=x$ and $\varphi(x)=-x$, both of which are convex, we note that $\mu$ and $\nu$ must necessarily have the same expectation.

Suppose that $\mu$ is absolutely continuous with respect to $\nu$. We say that $\mu$ is log-concave relative to $\nu$ if the Radon--Nikodym derivative $\mathrm{d}\mu/\mathrm{d}\nu$ is log-concave. If $\mathrm{d}\mu/\mathrm{d}\nu$ is log-concave, then it is necessarily the case that $\mu$ is dominated by $\nu$ in the convex ordering \cite{MM}.
 In particular, one may verify that if $\mu$ is any unit-mean log-concave measure supported on $[0,\infty)$, then the Radon-Nikodym derivative with respect to the standard exponential measure $\nu_{\mathrm{exp}}(\mathrm{d}x) = e^{-x}\mathrm{1}_{\{ x> 0 \}}\mathrm{d}x$, should it exist, is log-concave. 
 Combining these observations, we have:

 \begin{proposition}
 If $\mu$ is a log-concave measure on $[0,\infty)$ with unit mean, then $\mu$ is dominated in the convex order by the standard exponential measure $\nu_{\mathrm{exp}}$.
 \end{proposition}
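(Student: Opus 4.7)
My plan is to derive the proposition directly from the two ingredients already assembled in the paragraph preceding its statement: the Marsiglietti--Melbourne result that relative log-concavity implies convex domination \cite{MM}, and the observation that a unit-mean log-concave measure on $[0,\infty)$ is log-concave relative to $\nu_{\mathrm{exp}}$. Both $\mu$ and $\nu_{\mathrm{exp}}$ are unit-mean by hypothesis and by construction, so the equal-means condition intrinsic to convex ordering is automatic, and the only remaining work is to verify relative log-concavity explicitly.

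First I would dispose of the non-degenerate case. A log-concave probability measure on $\mathbb{R}$ whose support is not a single point is absolutely continuous with respect to Lebesgue measure and has a log-concave density, which I would write as $f(x)=e^{-V(x)}$ for some convex $V:[0,\infty)\to(-\infty,\infty]$ (with $V\equiv +\infty$ off the support of $\mu$). A direct computation then gives
\begin{align*}
\frac{\mathrm{d}\mu}{\mathrm{d}\nu_{\mathrm{exp}}}(x) \;=\; \frac{e^{-V(x)}}{e^{-x}} \;=\; e^{\,x - V(x)}\qquad (x \geq 0),
\end{align*}
so that the logarithm of the Radon--Nikodym derivative is $x-V(x)$, which is the sum of an affine function and the negation of a convex function, hence concave. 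This makes $\mathrm{d}\mu/\mathrm{d}\nu_{\mathrm{exp}}$ log-concave, and the Marsiglietti--Melbourne theorem then yields $\mu\preceq_{\mathrm{cx}}\nu_{\mathrm{exp}}$.

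The only other possibility is that $\mu$ fails to be absolutely continuous with respect to Lebesgue, in which case a standard structural fact for log-concave measures on $\mathbb{R}$ forces $\mu$ to be a Dirac mass, and the unit-mean hypothesis then pins down $\mu=\delta_1$. The Radon--Nikodym derivative does not exist here, but the convex ordering follows directly from Jensen's inequality: for any convex $\varphi$,
\begin{align*}
\int\varphi\,\mathrm{d}\mu \;=\; \varphi(1) \;=\; \varphi(\mathbb{E}[\nu_{\mathrm{exp}}]) \;\leq\; \int\varphi\,\mathrm{d}\nu_{\mathrm{exp}}.
\end{align*}
I do not foresee any real obstacle in this argument; it is essentially a packaging of a one-line calculation and a cited theorem. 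The only thing to be careful about is the bookkeeping around the support of $\mu$ and the extended-real-valued convention for $V$ when the support is a proper subinterval of $[0,\infty)$, but this is harmless under the standard convention that a non-negative function is log-concave iff its extended-real logarithm is concave.
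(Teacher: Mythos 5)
Your argument is correct and is essentially the same as the paper's: cite the Marsiglietti--Melbourne implication from relative log-concavity to convex domination (with equal means automatic since both measures are unit-mean), and verify relative log-concavity by observing that $\log(\mathrm{d}\mu/\mathrm{d}\nu_{\mathrm{exp}}) = x - V(x)$ is concave. You supply two details the paper leaves implicit — the explicit affine-minus-convex computation (the paper only says ``one may verify'') and the separate Jensen argument for the degenerate case $\mu=\delta_1$ where the Radon--Nikodym derivative fails to exist (the paper hedges with ``should it exist'') — but these are refinements of the same route, not a different one.
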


We note that we may immediately set $\varphi(x) = x^p$ and use the previous result to obtain Proposition \ref{prop:main2}.

%%%%%% above this point 
\begin{proof}[Proof of Proposition \ref{prop:main2} (i) and (iii)]

First we prove Proposition \ref{prop:main2} (i). For $h > 0$, consider the ``call option'' type function given by $\varphi(x) = (x - (h-1))_+$. This function is convex and dominates the indicator function $\mathrm{1} \{ x \geq h \}$. Using this fact to obtain the first inequality below, $\mu \prec_{\mathrm{cx}} \nu_{\mathrm{exp}}$ to obtain the second, and then integrating out to obtain the third, we have
\begin{align*}
\mathbf{P}_\mu( X \geq h ) \leq \mathbf{E}_\mu[ \varphi(X) ] \leq \mathbf{E}_{\nu_{\mathrm{exp}}}[ \varphi(X) ] \leq  \int_{h-1}^\infty (x-(h-1)) e^{- x} \mathrm{d}x = e^{1-h},
\end{align*}
completing the proof of Proposition \ref{prop:main2} (i).

To prove Proposition \ref{prop:main2} (iii), instead consider the ``put option'' function $\varphi:\mathbb{R} \to \mathbb{R}$ given by $\frac{1}{h}(2h-x)_+$, which is convex and dominates $\mathrm{1}\{ x \leq h \}$. Arguing similarly we have 
\begin{align*}
\mathbf{P}_\mu( X \leq h ) \leq \mathbf{E}_\mu[ \varphi(X) ] \leq \mathbf{E}_{\nu_{\mathrm{exp}}}[ \varphi(X) ] \leq \frac{1}{h} \int_0^{2h}(2h-x) e^{-x} \mathrm{d}x \leq 2h,
\end{align*}
completing the proof of Proposition \ref{prop:main2} (iii).
\end{proof}

\subsection{Proof of Theorem \ref{thm:main}}

In this section we complete the proof of Theorem \ref{thm:main}. Recall from the introduction that
\begin{align*}
g_i^{(N)} =  \frac{ \lambda_{i+1}^{(N)} - \lambda_i^{(N)} }{ S(i,N) }  \quad \text{and} \quad \tilde{g}_i^{(N)} := \frac{ \lambda_{i+1}^{(N)} - \lambda_i^{(N)} }{ \mathbf{E}[\lambda_{i+1}^{(N)} - \lambda_i^{(N)}] },
\end{align*}
where $S(i,N) := \sqrt{2/N}\rho_{\mathrm{sc}}(\gamma_{i/N})^{-1}$. 

As noted in the introduction, $\tilde{g}_i$ satisfy the inequalities in Proposition \ref{prop:main}, so that the proof of Theorem \ref{thm:main} amounts to showing that the two normalisations of the eigengaps are asymptotically equivalent. In this direction we have the following result:

\begin{proposition}[\cite{tao2}] \label{prop:expas}
With $o(1)$ terms that go to zero uniformly as $i,N \to \infty$ whenever $i/N$ is constrained to $[\delta,1-\delta]$, we have 
\begin{align*}
\mathbf{E}[g_i^{(N)}] = (1+o(1))
\end{align*}
\end{proposition}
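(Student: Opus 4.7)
The proposition reduces to showing that $\mathbf{E}[g_i^{(N)}] \to 1$ uniformly for $i/N \in [\delta, 1-\delta]$. My plan is to deduce this by upgrading the weak convergence result \eqref{eq:tao2} of \cite{tao2} to convergence of first moments, using log-concavity together with the moment bound of Theorem \ref{thm:tao}(ii) to supply the uniform integrability, and using the normalisation of the Gaudin--Mehta law to identify the limiting mean.

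The three ingredients I would assemble are as follows. First, by \eqref{eq:tao2}, $g_i^{(N)}$ converges weakly to the Gaudin--Mehta distribution $\mu_{\mathrm{GM}}$ uniformly in $i/N \in [\delta, 1-\delta]$, with an explicit $o(1)$ rate packaged in equation (23) of \cite{tao2}. Second, the mean of $\mu_{\mathrm{GM}}$ equals $1$; this is classical and can be extracted from the sine-kernel Fredholm determinant representation, but is in any case forced by the choice of semicircle rescaling in the definition of $g_i^{(N)}$. Third, I need uniform integrability of the family $\{g_i^{(N)}\}$, which follows most directly from the bound $\mathbf{E}[(g_i^{(N)})^2] \leq C_\delta$ in Theorem \ref{thm:tao}(ii). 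Alternatively, more in the spirit of the present paper, one can combine the $p=1$ case of Theorem \ref{thm:tao}(ii) (giving $\mathbf{E}[g_i^{(N)}] \leq C_\delta$) with Proposition \ref{prop:main2}(i) applied to the unit-mean rescaling $g_i^{(N)}/\mathbf{E}[g_i^{(N)}]$, producing a uniform exponential tail bound $\mathbf{P}(g_i^{(N)} \geq h) \leq e^{1 - h/C_\delta}$ valid for all $h > 0$ and $N \geq 1$. Either route yields $\mathbf{E}[g_i^{(N)} \mathrm{1}\{g_i^{(N)} > R\}] \to 0$ uniformly as $R \to \infty$, which combined with the weak convergence and the identity $\int_0^\infty x\,\mu_{\mathrm{GM}}(\mathrm{d}x) = 1$ forces $\mathbf{E}[g_i^{(N)}] \to 1$.

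The main obstacle is bookkeeping rather than conceptual: one must verify that the $o(1)$ rate produced in this way is controlled by the $o(1)$ appearing in equation (23) of \cite{tao2}, so that the claimed matching of rates with Theorem \ref{thm:main} is legitimate. Since the exponential tail above decays uniformly in $N$, truncating at a threshold $R = R(N) \to \infty$ that grows slowly relative to the weak-convergence rate lets the tail contribution be absorbed into Tao's $o(1)$, but this matching of scales is the one step that requires more than routine care.
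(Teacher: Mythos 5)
Your proposal would establish the qualitative statement $\mathbf{E}[g_i^{(N)}] = 1 + o(1)$, but it takes a genuinely different and more roundabout route than the paper. The paper's proof is a direct read-off: equation (23) of \cite{tao2}, specialised at $s=0$, \emph{already} gives $\mathbf{E}[g_i^{(N)}] = \int_0^\infty y\,p(y)\,\mathrm{d}y + o(1)$, where $p$ is the Gaudin--Mehta density; that is, Tao's asymptotic there controls an expectation directly (for a one-parameter family of functionals indexed by $s$), not merely the distribution function, so no upgrade from weak convergence is needed. The identity $\int_0^\infty y\,p(y)\,\mathrm{d}y = 1$ is then extracted by setting $y=0$ in equation (4) of \cite{tao2}. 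Your route --- weak convergence via \eqref{eq:tao2} together with uniform integrability from a second-moment bound or the exponential-tail bound of Proposition \ref{prop:main2}(i) --- is a valid general strategy and both of your suggested routes to uniform integrability would work. However, you correctly identify the weak point yourself: that argument produces an $o(1)$ depending on a truncation threshold $R(N)$ and the rate in the weak convergence, and there is no a priori reason it should coincide with the $o(1)$ of equation (23) in \cite{tao2}, which is what the paper asserts. In the paper this issue simply does not arise because the relevant $o(1)$ \emph{is} the one in equation (23). One smaller caveat: your remark that the unit mean of $\mu_{\mathrm{GM}}$ ``is in any case forced by the choice of semicircle rescaling'' is close to circular --- it is essentially a restatement of the conclusion; the honest verification is the Fredholm-determinant computation, which is what the paper invokes via equation (4) of \cite{tao2}.
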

\begin{proof}
The quantity $g_i^{(N)}$ is precisely the random variable $X$ defined at the beginning of Section 4 of \cite{tao2}. By setting $s = 0$ in equation (23) of that article, we see in particular that
\begin{align*}
\mathbf{E}[ g_i^{(N)} ] = \int_0^\infty y p(y)\mathrm{d}y + o(1),
\end{align*}
where $p(y)$ is the pdf of the Gaudin-Mehta law, and where the $o(1)$ converge to zero as $N \to \infty$, uniformly for all $i$ with $\delta \leq i/N \leq 1-\delta$. It can be seen, e.g.\ by setting $y= 0$ in equation (4) of \cite{tao2}, that $\int_0^\infty y p(y) \mathrm{d}y = 1$. That completes the proof.
\end{proof}

\begin{proof}[Proof of Theorem \ref{thm:main}]
As observed above, the unit-mean random variable $\tilde{g}_i$ obeys the bounds in Proposition \ref{prop:main2} (i), (ii) and (iii). Since $g_i$ is simply a multiple of $\tilde{g}_i$, Proposition \ref{prop:expas} states that $g_i = (1+o(1))\tilde{g}_i$. From this we obtain Theorem \ref{thm:main}.
\end{proof}


\begin{thebibliography}{99}

\bibitem{AGA}
\textsc{Artstein-Avidan, S., Giannopoulos, A., \& Milman, V.~D.}
(2021).
\emph{Asymptotic geometric analysis, Part II} (Vol.~261).
American Mathematical Society.

\bibitem{AGZ}
\textsc{Anderson, G.~W., Guionnet, A., \& Zeitouni, O.}
(2010).
\emph{An introduction to random matrices} (Vol.~118).
Cambridge University Press.

\bibitem{baryshnikov}
\textsc{Baryshnikov, Y.}
(2001).
GUEs and queues.
\emph{Probability Theory and Related Fields} \textbf{119}(2), 256--274.

\bibitem{brazitikos}
\textsc{Brazitikos, S., Giannopoulos, A., Valettas, P., \& Vritsiou, B.~H.}
(2014).
\emph{Geometry of isotropic convex bodies} (Vol.~196).
American Mathematical Society.

\bibitem{gaudin}
\textsc{Gaudin, M.}
(1961).
Sur la loi limite de l'espacement des valeurs propres d'une matrice al\'eatoire.
\emph{Nuclear Physics} \textbf{25}, 447--458.

\bibitem{gustavsson}
\textsc{Gustavsson, J.}
(2005).
Gaussian fluctuations of eigenvalues in the GUE.
\emph{Annales de l'I.H.P. Probabilit\'es et Statistiques}
\textbf{41}, 151--178.

\bibitem{JM}
\textsc{Jaramillo, A. \& Melbourne, J.}
(2022).
Quantitative limit theorems via relative log-concavity.
\emph{arXiv preprint} arXiv:2210.11632.

\bibitem{JP}
\textsc{Johnston, S.~G.~G. \& Prochno, J.}
(2024).
The macroscopic shape of Gelfand--Tsetlin patterns and free probability.
\emph{arXiv preprint} arXiv:2410.10754.

\bibitem{leindler}
\textsc{Leindler, L.}
(1972).
On a certain converse of H\"older's inequality~II.
\emph{Acta Scientiarum Mathematicarum (Szeged)}
\textbf{33}, 217--223.

\bibitem{LV}
\textsc{Lov\'asz, L. \& Vempala, S.}
(2007).
The geometry of log-concave functions and sampling algorithms.
\emph{Random Structures \& Algorithms} \textbf{30}(3), 307--358.

\bibitem{mehta}
\textsc{Mehta, M.~L.}
(2004).
\emph{Random matrices} (Vol.~142).
Elsevier.

\bibitem{MM}
\textsc{Marsiglietti, A. \& Melbourne, J.}
(2022).
Moments, concentration, and entropy of log-concave distributions.
\emph{arXiv preprint} arXiv:2205.08293.

\bibitem{narayanan}
\textsc{Narayanan, H.}
(2025).
On the limit of random hives with GUE boundary conditions.
\emph{arXiv preprint} arXiv:2502.06414.

\bibitem{NS}
\textsc{Narayanan, H. \& Sheffield, S.}
(2024).
Large deviations for random hives and the spectrum of the sum of two random matrices.
\emph{The Annals of Probability} \textbf{52}(3), 1093--1152.

\bibitem{NST}
\textsc{Narayanan, H., Sheffield, S., \& Tao, T.}
(2024).
Sums of GUE matrices and concentration of hives from correlation decay of eigengaps.
\emph{Probability Theory and Related Fields} \textbf{190}(3), 1121--1165.

\bibitem{prekopa}
\textsc{Pr\'ekopa, A.}
(1973).
Logarithmically concave measures and functions.
\emph{Acta Scientiarum Mathematicarum (Szeged)}
\textbf{34}, 335--343.

\bibitem{tao}
\textsc{Tao, T.}
(2024).
On the distribution of eigenvalues of GUE and its minors at fixed index.
\emph{arXiv preprint} arXiv:2412.10889.

\bibitem{tao2}
\textsc{Tao, T.}
(2013).
The asymptotic distribution of a single eigenvalue gap of a Wigner matrix.
\emph{Probability Theory and Related Fields} \textbf{157}(1), 81--106.


\end{thebibliography}
\end{document}